\newtheorem{theorem}{Theorem}[section]
\newtheorem{lemma}[theorem]{Lemma}
\numberwithin{equation}{subsection}
\newtheorem{definition}[theorem]{Definition}
\newcommand{\R}{\mathbb R}
\newcommand{\F}{\mathbb F}
\title{Cotangent sums related to the Riemann Hypothesis for various shifts of the argument}
\author{Helmut Maier and Michael Th. Rassias}
\date{\today}
\address{Department of Mathematics, University of Ulm, Helmholtzstrasse 18, 89081 Ulm, Germany.}
\email{helmut.maier@uni-ulm.de}
\address{Institute of Mathematics, University of Zurich, CH-8057, Zurich, Switzerland 
\&
Moscow Institute of Physics and Technology,
141700 Dolgoprudny,
Institutskiy per, d. 9,
Russia 
\& Institute for Advanced Study, Program in Interdisciplinary Studies,
1 Einstein Dr, Princeton, NJ 08540, USA.}
\email{michail.rassias@math.uzh.ch}
\thanks{}
\begin{document}

 \maketitle
 
\begin{abstract} One of the approaches to the Riemann Hypothesis is the Nyman-Beurling criterion. Cotangent sums play a significant role. Here we investigate the values of these cotangent sums for various shifts of the argument. 
\\ \\
\textbf{Key words:} Cotangent sums, equidistribution, moments, asymptotics, Estermann zeta function, Riemann zeta function, Riemann Hypothesis, exponential sums in finite fields.\\ 
\textbf{2000 Mathematics Subject Classification:} 33B10,\,\, 41A60,\,\,11M06,\\28A25,\,\,46E15,\,\,60E10.%
\newline

\end{abstract}

\section{Introduction}
In several papers  (\cite{mr}, \cite{mr2}, \cite{mr3}, \cite{mr4}, \cite{max}) the authors have investigated the distribution of the cotangent sums 
%
%\begin{definition}$\label{x:thedef}$
$$c_0\left(\frac{r}{b}\right):=-\sum_{m=1}^{b-1}\frac{m}{b}\cot\left(\frac{\pi mr}{b}\right)\:,$$
where $r$, $b\in\mathbb{N}$, $b\geq 2$, $1\leq r\leq b$ and $(r,b)=1$.\\
%\end{definition}
They could establish a link with the function $g(\alpha)$, defined by
\begin{definition}\label{def11}
$$g(\alpha):=\sum_{l=1}^{+\infty}\frac{1-2\{l\alpha\}}{l}\:,\ \alpha\in(0,1)\:,$$
which is convergent for almost all $\alpha$ (see \cite{bre}) .
\end{definition}
\begin{definition}\label{def12}
For $z\in\mathbb{R}$, let 
$$F(z):=\text{meas}\{\alpha\in(0,1)\::\: g(\alpha)\leq z  \},$$
where $``\text{meas}"$ denotes the Lebesgue measure.\\
Let $\mu$ be the uniquely defined positive measure on $\R$ with the following property:\\
For $\alpha<\beta\in\R$ we have:
$$\mu([\alpha,\beta])=F(\beta)-F(\alpha)\:.$$
We set
$$C_0(\mathbb{R}):=\{f\in C(\mathbb{R})\::\: \forall\: \epsilon>0,\: \exists\:\text{a compact set}\ \mathcal{K}\subset\mathbb{R},\:\text{such that}\ |f(x)|<\epsilon,\forall\: x\not\in \mathcal{K}  \}.$$
\end{definition}
The second author in his thesis \cite{rasthesis} (see also \cite{mr}, Theorem 1.2) could establish the following result:\\
Let $A_0,$ $A_1$ be fixed constants, such that $1/2< A_0<A_1<1$. For all $f\in C_0(\mathbb{R})$, we have
\[
\lim_{b\rightarrow+\infty}\frac{1}{(A_1-A_0)\phi(b)}\sum_{\substack{r\::\: (r,b)=1\\ A_0b\leq r\leq A_1b}}f\left( \frac{1}{b}c_0\left( \frac{r}{b}\right) \right)=\int f\:d\mu, \tag{1.1}
\]
where $\phi(\cdot)$ denotes the Euler phi-function.\\
Later S. Bettin \cite{dbcot} could replace the inequality $1/2< A_0<A_1<1$ by $0<A_0<A_1\leq 1$. In \cite{cotprimes} the authors considered the distribution of the values of $c_0$ for rational numbers with primes as numerator and a fixed prime as denominator and could prove a result analogous to (1.1) (Theorem 1.3 of \cite{cotprimes}). Namely, they proved that for all $f\in C_0(\mathbb{R})$, the following holds true:
$$\lim_{\substack{q\rightarrow+\infty\\ q\ \text{prime}}}\frac{\log q}{(A_1-A_0)q}\ \sum_{\substack{p\::\: A_0q\leq p\leq A_1q\\ p\ \text{prime}}}f\left( \frac{1}{q}c_0\left( \frac{p}{q}\right) \right)=\int f\:d\mu\:.$$
The cotangent sum $c_0(r/b)$ has gained importance in the Nyman-Beurling criterion for the Riemann Hypothesis through its relation with the Vasyunin sum, which is defined by:
$$V\left(\frac{r}{b}\right):=\sum_{m=1}^{b-1}\left\{\frac{mr}{b}\right\}\cot\left(\frac{\pi mr}{b}\right)\:,$$
where $\{u\}:=u-\lfloor u \rfloor$, $u\in\mathbb{R}.$\\
We have (see \cite{BETT2}, \cite{BEC}) that 
$$V\left(\frac{r}{b}\right)=-c_0\left(\frac{\bar{r}}{b}\right),$$
where $\bar{r}$ is such that $\bar{r} r\equiv1\:(\bmod\; b)$.\\
One has the following identity
(see \cite{BETT2}, \cite{BEC}):
\begin{align*}
\frac{1}{2\pi(rb)^{1/2}}\int_{-\infty}^{+\infty}\left|\zeta\left(\frac{1}{2}+it\right)\right|^2\left(\frac{r}{b}\right)^{it}\frac{dt}{\frac{1}{4}+t^2}&=\frac{\log 2\pi -\gamma}{2}\left(\frac{1}{r}+\frac{1}{b} \right)\tag{1.2}\\
&+\frac{b-r}{2rb}\log\frac{r}{b}-\frac{\pi}{2rb}\left(V\left(\frac{r}{b}\right)+V\left(\frac{b}{r}\right)\right).
\end{align*}
Terms of the form (1.2) appear in the  Nyman-Beurling-Ba\'ez-Duarte-Vasyunin approach to the Riemann Hypothesis (see \cite{bag}, \cite{BETT2}). The Riemann Hypothesis is true if and only if 
$$\lim_{N\rightarrow+\infty}d_N=0,$$
where
$$d_N^{2}:=\inf_{D_N}\frac{1}{2\pi}\int_{-\infty}^{+\infty}\left|1-\zeta\left(\frac{1}{2}+it\right) D_N\left(\frac{1}{2}+it\right)\right|^2\frac{dt}{\frac{1}{4}+t^2}\:,$$
the infimum being taken over all Dirichlet polynomials
$$D_N(s):=\sum_{n=1}^{N}\frac{a_n}{n^s}.$$
In this paper we simultaneously consider the values of $c_0$ for various shifts of the argument. We consider general numerators as well as prime numerators. For simplicity the denominator will be a fixed prime $q$. Our main results are the following:
\begin{theorem}\label{thm11}
Let $A_0,$ $A_1$ be fixed constants, such that $1/2< A_0<A_1<1$. Let $a_1, a_2, \ldots, a_L$ be distinct non-negative integers. Let $f_1, f_2, \ldots, f_L\in C_0(\R)$ $(L\in\mathbb{N})$. Then we have:\\
\noindent (i) 
$$\lim_{\substack{q\rightarrow+\infty\\ q\ \text{prime}}}\frac{1}{\phi(q)}\sum_{{r\::\: A_0q\leq r\leq A_1q}}\frac{1}{(A_1-A_0)^L}\:\prod_{l=1}^L\:f_l\left( c_0\left( \frac{r+a_l}{q}\right) \right)=\prod_{l=1}^L \left( \int f_l\:d\mu\right)\:.$$
\noindent (ii)
$$\lim_{\substack{q\rightarrow+\infty\\ q\ \text{prime}}}\frac{\log q}{q}\sum_{\substack{p\::\: A_0q\leq p\leq A_1q \\ p\:\text{prime}}}\frac{1}{(A_1-A_0)^L}\:\prod_{l=1}^L\:f_l\left( c_0\left( \frac{p+a_l}{q}\right) \right)=\prod_{l=1}^L \left( \int f_l\:d\mu\right)\:.$$
\end{theorem}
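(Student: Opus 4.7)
The plan is to prove the theorem by the method of joint moments, generalizing the single-variable moment computation of \cite{mr} and \cite{cotprimes}. Since $\mu$ is determined by its moments and the marginals are tight uniformly in $q$ (both facts inherited from the $L=1$ case), convergence against products $\prod_l f_l$ with $f_l\in C_0(\R)$ reduces by a Stone-Weierstrass / truncation argument to convergence of joint polynomial moments
\[
M_{\mathbf{k}}(q):=\frac{1}{\phi(q)(A_1-A_0)^L}\sum_{A_0 q\leq r\leq A_1 q}\prod_{l=1}^{L}c_0\!\left(\tfrac{r+a_l}{q}\right)^{k_l}\longrightarrow\prod_{l=1}^L\int x^{k_l}\,d\mu,\quad \mathbf{k}\in\Z_{\geq 0}^L,
\]
(and analogously for primes in part (ii)).

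The core step expands each $c_0$ factor as a finite cotangent sum, yielding
\[
M_{\mathbf{k}}(q)=\frac{(-1)^K}{\phi(q)(A_1-A_0)^L}\sum_{\mathbf{m}}\Bigl(\prod_{l,j}\tfrac{m_{l,j}}{q}\Bigr)\sum_{A_0 q\leq r\leq A_1 q}\prod_{l,j}\cot\!\Bigl(\tfrac{\pi m_{l,j}(r+a_l)}{q}\Bigr),
\]
where $K=k_1+\cdots+k_L$ and $\mathbf{m}=(m_{l,j})$ with $1\le m_{l,j}\le q-1$. Split $\mathbf{m}$ into (a) \emph{diagonal} configurations, for which no nontrivial $\Z$-linear combination of the $m_{l,j}$ across distinct $l$-blocks vanishes mod $q$, and (b) \emph{off-diagonal} configurations. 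Diagonal tuples, after the average over $r$, decouple in $l$ and contribute precisely $\prod_{l}\int x^{k_l}\,d\mu$ by reduction to the single-variable computation of \cite{mr} applied blockwise; the shifts $a_l$ drop out of the diagonal contribution since the relevant phase is trivial modulo $q$.

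The main obstacle is the off-diagonal estimate. Writing each $\cot(\pi x/q)$ as a finite Fourier series on $\Z/q\Z$ converts the innermost $r$-sum into linear exponential sums of the form
\[
\sum_{A_0 q\leq r\leq A_1 q} e\!\left(\tfrac{r\,\Lambda(\mathbf{n},\mathbf{m})+\Phi(\mathbf{n},\mathbf{m},\mathbf{a})}{q}\right),
\]
where $\Lambda$ is a $\Z$-linear form in the Fourier variables $\mathbf{n}$ weighted by $\mathbf{m}$. The distinctness of $a_1,\ldots,a_L$ forces, on every off-diagonal configuration, a positive-density set of $\mathbf{n}$ for which $\Lambda\not\equiv 0\pmod q$; completion of the $r$-sum via Polya-Vinogradov together with Weil-type bounds for the resulting Kloosterman/Ramanujan-type sums over $\F_q$ (justifying the ``exponential sums in finite fields'' keyword) delivers square-root cancellation in $r$. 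Summed over off-diagonal $\mathbf{m}$, this contribution is shown to be $o(q^K)$, producing the desired error.

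For part (ii) the plan is structurally identical, but the average over $r$ is replaced by the average over primes $p\in[A_0q,A_1q]$; the linear exponential sums become $\sum_{p}e(\lambda p/q)$ and are controlled by the Siegel-Walfisz theorem (equivalently Vinogradov's bound for exponential sums over primes), exactly as in \cite{cotprimes}. This replacement is responsible for the $\log q/q$ outer normalization in (ii) and preserves the off-diagonal cancellation whenever the linear form $\Lambda$ is non-degenerate modulo $q$, completing the argument.
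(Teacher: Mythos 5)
Your proposal takes a genuinely different route from the paper, and in its present form it has gaps that the paper's machinery is designed precisely to avoid.

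The paper does \emph{not} expand $c_0$ directly into products of cotangents. Instead, it uses the reduction $c_0(r/q)=\tfrac1r c_0(1/q)-\tfrac1r Q(r/q)$ with $Q(r/q)=\sum_m\cot(\pi mr/q)\lfloor mr/q\rfloor$, then splits $Q=Q_0+Q_1$ according to the size of the fractional parts $s_j^{(l)},t_j^{(l)}$ on subintervals where $\lfloor(r+a_l)m/q\rfloor$ is constant. The crucial observation is that $Q_0((r+a_l)/q)$ is, up to controlled error, a function of the single arithmetic parameter $\alpha^{(l)}=q_l^*/(r+a_l)$ where $qq_l^*\equiv1\pmod{r+a_l}$. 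The whole theorem then rests on Lemma~\ref{lem41}, which shows that the $L$-tuple $(\alpha^{(1)},\ldots,\alpha^{(L)})$ becomes equidistributed in $[0,1]^L$ as $r$ ranges over $[A_0q,A_1q]$; this is proved by bounding the complete sums
\[
E(n,m_1,\ldots,m_L,q)=\sum_{r}e\!\left(\frac{nr+m_1(r+a_1)^*+\cdots+m_L(r+a_L)^*}{q}\right)
\]
via Bombieri's theorem on exponential sums along curves (Lemma~\ref{lem33}). The inverses $(r+a_l)^*$ make these sums genuinely nonlinear in $r$, and Bombieri's bound (not Weil for Kloosterman sums per se, and certainly not P\'olya--Vinogradov) is what delivers the $O(q^{1/2})$ needed.

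Your plan replaces this with a direct Fourier expansion of $\cot$ on $\Z/q\Z$, which produces phases \emph{linear} in $r$. A purely linear phase never yields Kloosterman or Ramanujan sums and never requires Weil-type input; the $r$-sum is a geometric sum that is either trivially small (when the linear form $\Lambda(\mathbf n,\mathbf m)\not\equiv0$) or trivially $\asymp q$ (when $\Lambda\equiv0$). The entire arithmetic content of the problem — how often and how uniformly $\Lambda\equiv0$ occurs, weighted by the sizes $m_{l,j}/q$ and the Fourier coefficients of $\cot$ — is thereby pushed into a combinatorial condition on $(\mathbf n,\mathbf m)$ that you call ``diagonal versus off-diagonal,'' but this condition is not well-defined as stated, and you give no argument that the diagonal part isolates the product of one-variable moments or that the off-diagonal part is $o(1)$ after the full $(\mathbf n,\mathbf m)$-summation. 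In particular, the claim that ``the shifts $a_l$ drop out'' because the phase is trivial mod $q$ misses the paper's actual mechanism: the shifts enter through the reciprocals $(r+a_l)^*\bmod q$ (equivalently through $q^*\bmod (r+a_l)$), and it is the asymptotic independence of these $L$ inverse residues — a Bombieri-level fact — that makes the limit factor as $\prod_l\int f_l\,d\mu$.

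For part (ii), the same mismatch appears: you propose Siegel--Walfisz or Vinogradov's bound for $\sum_p e(\lambda p/q)$ with $\lambda$ an integer, but the paper needs Fouvry--Michel (Lemma~\ref{lem34}) precisely because the phases are rational functions of $p$ involving inverses, not linear forms. Linear-phase technology over primes would not control the sums that actually arise.

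To repair the proposal you would essentially have to reintroduce the reduction to inverse residues (via (2.1) and the $Q_0/Q_1$ decomposition, or an equivalent) so that the exponential sums you need are the Bombieri-type sums above; at that point you would have reconstructed the paper's argument rather than an alternative to it.
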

The proof of (ii) can be obtained from the proof of (i) by only minor changes. We give only a detailed proof of (i) and sketch the changes needed for the proof of (ii).
\section{Outline of the proof}
Several fundamental ideas already appear in the paper \cite{mr} and in the thesis \cite{rasthesis}. The key to the treatment of the sum $c_0(r/q)$ lies in its relation to the sum 
$$Q\left(\frac{r}{q}\right):=\sum_{m=1}^{q-1}\cot\left(\frac{\pi mr}{q}\right)\left\lfloor \frac{mr}{q}\right\rfloor\:. $$
The second author in his thesis \cite{rasthesis} established the following (see also
Proposition 1.8 of \cite{mr}):
\[
c_0\left(\frac{r}{q}\right)=\frac{1}{r}\:c_0\left(\frac{1}{q}\right)-\frac{1}{r}Q\left(\frac{r}{q}\right)\:.\tag{2.1}
\]
We now have to consider simultaneously the values
$$f_1\left(Q\left(\frac{r+a_1}{q}\right)\right)\:,\ldots, f_L\left(Q\left(\frac{r+a_L}{q}\right)\right)\:.$$
By the Weierstrass approximation theorem this question may be reduced to the study of the joint distribution of the products 
\[
\prod(k_1, \ldots, k_L):= Q\left(\frac{r+a_1}{q}\right)^{k_1}\cdots Q\left(\frac{r+a_L}{q}\right)^{k_L}  \tag{2.2}
\]
for $L$-tuplets $(k_1, \ldots, k_L)$ of non-negative integers.\\
In a similar fashion as in the previous papers, we shall break up the range of summation into subintervals in which 
$$\left\lfloor \frac{(r+a_l)m}{q}\right\rfloor$$
assumes constant values.
\begin{definition}\label{def21}
For $j\in\mathbb{N}$, $l\in\{1,\ldots, L\}$ we set:
$$S_j^{(l)}:=\{ (r+a_l)m\::\: qj\leq (r+a_l)m<q(j+1)\}$$
and write
$$S_j^{(l)}:=\{ qj+s_j^{(l)}, qj+s_j^{(l)}+(r+a_l) ,\ldots, qj+s_j^{(l)}+(r+a_l)d_j^{(l)}  \}\:.$$
We also define $t_j^{(l)}$ by
$$qj+s_j^{(l)}+d_j^{(l)}(r+a_l)+t_j^{(l)}:=q(j+1)\:.$$
\end{definition}
In \cite{mr} and \cite{cotprimes} the map
$$j\longrightarrow s_j$$
and its inverse
$$s\longrightarrow j(s)$$
were very important.\\
They are now replaced by $L$ maps
$$j\longrightarrow s_j^{(l)}\ \ (1\leq l\leq L)$$
and their inverses
$$s\longrightarrow j_l(s)$$
We also have $L$ pairs of congruences
\[
s_j^{(l)}\equiv -qj\:(\bmod\: (r+a_l))\ \ \text{and}\ \ t_j^{(l)}\equiv q(j+1)\:(\bmod\: (r+a_l))\:.\tag{2.2}
\]
Each of the sums $$Q\left( \frac{r+a_l}{q} \right)$$ is dominated by small values of $s_j^{(l)}$ and $t_j^{(l)}$ because of the poles of the function $\cot(\pi x)$ at $x=0$ and $x=1$. We denote this partial sum by $$Q_0\left( \frac{r+a_l}{q} \right)$$ and thus we obtain the decomposition 
$$Q\left( \frac{r+a_l}{q} \right)=Q_0\left( \frac{r+a_l}{q} \right)+Q_1\left( \frac{r+a_l}{q} \right)\:.$$
The function $\cot(\pi x)$ is antisymmetric
$$\cot(\pi(1-x))=-\cot(\pi x)\:.$$
Therefore there will be considerable cancellation in the sums $$Q_1\left( \frac{r+a_l}{q} \right)\:,$$ which thus will be small.\\
By the binomial theorem the products 
$$\prod (k_1,\ldots, k_L)$$
in (2.1) will be linear combinations of products of the form
\[
Q_{\epsilon_1}\left( \frac{r+a_{l_1}}{q} \right)^{h_1}\cdots\: Q_{\epsilon_M}\left( \frac{r+a_{l_M}}{q} \right)^{h_M}  \tag{2.3}
\]
with $\epsilon_g\in\{0,1\}\:.$\\
We shall show that only the products with $\epsilon_g=0$ for $1\leq g\leq M$ will give a substantial contribution.\\
The asymptotic size of these products will be determined by localising the solutions of the congruences (2.2) simultaneously for all $l$. Whereas in \cite{mr} Kloosterman sums could be used for this localisation, here we need results on more general exponential sums in finite fields, due to Bombieri \cite{bombieri}.\\
The contribution of the other products in (2.3) is small, since at least one $\epsilon_g=1$.\\
For the discussion of these factors 
$$Q_{1}\left( \frac{r+a_{l_g}}{q} \right)^{h_g}$$
we can refer to results of \cite{mr}.
\section{Exponential sums in finite fields}
The next result is due to Bombieri \cite{bombieri}. We largely adopt the definitions and results of this paper.
\begin{definition}\label{def31}
Let $p$ be a prime number, $q=p^m$ be a prime power, $k$ the finite field with $q$ elements. Let $X$ be a complete non-singular curve of genus $g$, defined over $k$.\\
Let ${R}(x)\in k(x)$ be a rational function on $X$, satisfying the condition 
\[
R(x)\neq h^p-h\ \ \text{for}\ h\in\bar{k}(x)\:, \tag{3.1}
\]
$\bar{k}$ being the algebraic closure of ${k}$.
\end{definition}
We may identify $R(x)$ with a rational map
$$R\::\: X\rightarrow A^1$$
of $X$ into the affine line $A^1$ defined over $k$.\\
Let $X_m$ be the set of points of $X$ defined over $F_{q^m}$ (the field of $q^m$ elements), thus we may speak about the value of the function $R(x)$ at a point $x$ of the curve $X$. Let
$$S_m(R,X):=\sum_{x\in X_m} e\left( \frac{\sigma R(x)}{q^m}\right)\:,$$
where $\sigma$ is the absolute trace from $F_{q^m}$ to $k$.
\begin{lemma}\label{lem32}
We have
$$|S_m(R,X)|\leq (d_1^2+2d_1d_2-3d_1)(\sqrt{q})^m+d_1^2\:.$$
\end{lemma}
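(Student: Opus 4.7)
The plan is to reproduce Bombieri's argument, which reduces the bound on the exponential sum $S_m(R,X)$ to the Riemann hypothesis for curves over finite fields (Weil's theorem) applied to an appropriately constructed Artin--Schreier covering of $X$.

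First I would construct the Artin--Schreier cover $Y\to X$ defined by the equation $y^p-y=R(x)$. The hypothesis (3.1) that $R\neq h^p-h$ for any $h\in \bar{k}(x)$ is precisely what is needed to ensure that this covering is geometrically irreducible, so that $Y$ is an absolutely irreducible (but possibly singular) curve defined over $k$, of degree $p$ over $X$. Next I would relate $S_m(R,X)$ to the point count $|Y_m|$ on $Y$ over $F_{q^m}$: using orthogonality of additive characters of $F_p$, one writes
\[
\sum_{t=0}^{p-1} S_m(tR,X) = p\cdot\#\{x\in X_m:\ R(x)\in \mathrm{Im}(\mathrm{Frob}-\mathrm{id})\} = |Y_m| - (\text{contribution of poles}),
\]
so that each individual $S_m(R,X)$ can be extracted, modulo controlled boundary terms coming from the poles of $R$ on $X$.

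The second main step is to invoke the Weil bound on $Y$: if $\widetilde{Y}$ denotes the normalisation of $Y$ with genus $g_{\widetilde{Y}}$, then
\[
\bigl| |\widetilde{Y}_m| - (q^m+1) \bigr| \leq 2g_{\widetilde{Y}}\,(\sqrt{q})^m,
\]
and one then has to estimate $g_{\widetilde{Y}}$ in terms of the invariants of $R$ and $X$. This is where the shape of the bound $d_1^2+2d_1d_2-3d_1$ emerges. I would apply the Riemann--Hurwitz formula to the degree-$p$ covering $\widetilde{Y}\to X$, with careful accounting of the wild ramification: at each pole of $R$ of order $d$ coprime to $p$, Artin--Schreier theory contributes $(d+1)(p-1)$ to the different, and these are the only ramification points because of condition (3.1). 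Summing the pole orders (controlled by $d_1$) and packaging the genus of $X$ (controlled by $d_2$) yields a bound for $2g_{\widetilde{Y}}$ of the form $d_1^2+2d_1d_2-3d_1$, and transferring the Weil estimate back through the identity in the first step introduces the additive term $d_1^2$ accounting for the contribution of the pole set and singular points.

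The main technical obstacle is the genus computation: wild ramification at the poles of $R$ must be handled via the conductor-discriminant formula for Artin--Schreier extensions, and the bound must be sharp enough to yield precisely $d_1^2+2d_1d_2-3d_1$ rather than a weaker polynomial in $d_1,d_2$. Everything else --- the character sum manipulation and the application of Weil's theorem --- is comparatively routine once the correct geometric model $Y$ has been identified and its genus bounded. Since this lemma is used as a black box in the sequel, I would only sketch these ingredients and refer to \cite{bombieri} for the full details.
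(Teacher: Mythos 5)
The paper's own proof of this lemma is a single line: it cites Theorem~6 of Bombieri's paper \cite{bombieri} and nothing more. Your proposal goes further and reconstructs the internal mechanism behind Bombieri's theorem, and the outline you give --- the Artin--Schreier cover $Y\colon y^p-y=R(x)$, the role of hypothesis (3.1) in ensuring absolute irreducibility of $Y$, the Weil bound on $Y$, and a Riemann--Hurwitz computation with wild ramification controlling the genus in terms of $d_1$ and $d_2$ --- is indeed the correct skeleton of Bombieri's argument. Since you explicitly end by saying you would defer to \cite{bombieri} for the full details, your proposal is in the same spirit as the paper's, just with an instructive sketch added in front of the citation. One small imprecision worth flagging: the identity $\sum_{t=0}^{p-1}S_m(tR,X)\approx |Y_m|$ does not by itself let you ``extract'' the single sum $S_m(R,X)$; what Bombieri actually does is factor the zeta function of $Y$ as the zeta function of $X$ times a product of $L$-functions $L(tR,T)$, one for each nontrivial additive character, and bound the degree of each such $L$-function via the Riemann--Hurwitz/conductor computation. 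The Riemann hypothesis for curves then bounds the reciprocal roots of each $L(tR,T)$ individually, and it is this that yields the estimate on the single sum $S_m(R,X)$, not the aggregate point count. This is a refinement of, rather than a contradiction to, your outline, and the rest of the sketch (including the way the constant emerges from the ramification data) is sound.
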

\begin{proof}
This is Theorem 6 of \cite{bombieri}.
\end{proof}
We now have the following Corollary:
\begin{lemma}\label{lem33}
Let $q$ be a prime number. Let $a_1, \ldots , a_L$ be distinct non-negative integers, $n, m_1, \dots, m_L$ integers not all $0$, 
$$R(x)=nx+\frac{m_1}{x+a_1}+\cdots+\frac{m_L}{x+a_L}\:.$$
Then we have:
$$\sum_{\substack{x=1 \\ x\neq -a_l,\ 1\leq l\leq L}}^{q-1} e\left( \frac{R(x)}{q} \right)=O(q^{1/2})\:.$$
The constant implied by the $O$-symbol may depend on $L$.
\end{lemma}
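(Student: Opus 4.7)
The plan is to apply Lemma \ref{lem32} directly to $R(x)$, viewed as a rational function on the projective line $X=\mathbb{P}^1$ over $k=\mathbb{F}_q$. Since $q$ is prime, we have $p=q$ and take $m=1$, so that $\sigma$ is the identity on $\mathbb{F}_q$ and the complete character sum
$$S_1(R,X)=\sum_{\substack{x\in\mathbb{P}^1(\mathbb{F}_q)\\ R(x)\text{ finite}}}e\!\left(\frac{R(x)}{q}\right)$$
coincides, up to at most $L+2$ extra terms each of modulus $\leq 1$ (namely $x=0$ and $x=\infty$ if they are not among the poles of $R$), with the sum appearing in the statement. The genus here is $g=0$, so the invariants $d_1,d_2$ in Lemma \ref{lem32} will depend only on the degree of the polar divisor of $R$, which is at most $L+1$; thus $d_1,d_2=O_L(1)$ and the final bound becomes $O_L(q^{1/2})$.

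Before invoking Lemma \ref{lem32} I would verify the Artin--Schreier non-triviality condition (3.1): $R\neq h^{p}-h$ for any $h\in\bar{k}(x)$. After reducing $n,m_1,\ldots,m_L$ modulo $q$ (which does not affect the exponential), the hypothesis that they are not all zero means, in the non-trivial case, that $R\in\mathbb{F}_q(x)$ is a non-zero rational function. Its poles lie at $x=-a_l$ (simple, whenever $m_l\not\equiv 0\pmod q$) and at $\infty$ (simple, whenever $n\not\equiv 0\pmod q$). At least one of these is present. On the other hand, if $h\in\bar{k}(x)$ has a pole of order $k\geq 1$ at some point, then $h^{p}$ has a pole of order $pk$ there, which strictly dominates the pole of $h$, so $h^{p}-h$ has a pole of order divisible by $p=q\geq 2$ at every one of its poles. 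A function with a simple pole therefore cannot be of the form $h^{p}-h$, which verifies (3.1).

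With (3.1) established, Lemma \ref{lem32} gives $|S_1(R,X)|\leq(d_1^{2}+2d_1d_2-3d_1)\sqrt{q}+d_1^{2}$ with $d_1,d_2$ bounded in terms of $L$ alone, and the desired bound follows after the $O_L(1)$ correction described above. The main obstacle is not any deep estimate but the bookkeeping: one must confirm that Bombieri's invariants $d_1,d_2$ are indeed $O_L(1)$ on $\mathbb{P}^1$ for this specific $R$ (which reduces to counting poles and their orders), and one must handle the degenerate case in which, after reduction modulo $q$, all coefficients vanish---in that case the hypothesis forces some $m_l$ or $n$ to be a non-zero multiple of $q$, so after cancellation $R\equiv 0$ and the character sum is $q-1-L$, which forces us to interpret the hypothesis as excluding this (otherwise trivially false) regime, exactly as in the corresponding argument of \cite{mr}.
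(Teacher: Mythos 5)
Your proposal is correct and follows exactly the route the paper intends: the paper gives no argument for Lemma~\ref{lem33} beyond the preamble ``We now have the following Corollary'' to Lemma~\ref{lem32}, and your write-up supplies precisely the missing details --- verifying the Artin--Schreier condition~(3.1) by noting that $h^{p}-h$ can only have poles of order divisible by $p$ while $R$ (once not identically zero in $\mathbb{F}_q(x)$) has at least one simple pole, and observing that Bombieri's invariants $d_1, d_2$ are $O_L(1)$ on $\mathbb{P}^1$. Your flag about the degenerate case where $n,m_1,\dots,m_L$ are nonzero integers all divisible by $q$ is also a fair observation: as literally stated the lemma is false there, and the intended reading (and the one consistent with the application in Lemma~\ref{lem41}, where those $n$ contribute negligibly through the decay of the Fourier coefficients $c(n)$) is that at least one coefficient survives reduction modulo $q$.
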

\begin{lemma}\label{lem34}
Let $\F_r$ be the finite field with $r$ elements and $\psi$ be a non-trivial additive character over $\F_r$, $f$ a rational function of the form 
$$f(x)=\frac{P(x)}{Q(x)}\:,$$
$P$ and $Q$ relatively prime monic polynomials,
$$S(f;r,x):=\sum_{p\leq x} \psi(f(p))\:.$$
($p$ denotes the $p$-fold sum of the element $1$ in $\F_r$).\\
Then we have
$$S(f;r,x)\ll r^{3/16+\epsilon}\: x^{25/32}\:.$$
The implied constant depends only on $\epsilon$ and the degrees of $P$ and $Q$.
\end{lemma}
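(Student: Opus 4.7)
The plan is to combine Vaughan's identity with the Weil-type bound recorded in Lemma \ref{lem32}. By partial summation it suffices to bound $\tilde S(f;r,x) := \sum_{p \le x}(\log p)\, \psi(f(p))$. Applying Vaughan's identity at parameters to be chosen below decomposes $\tilde S$, up to factors absorbed into $r^{\epsilon}$, into a bounded number of \emph{Type I} sums
$$T_{\mathrm{I}} = \sum_{m \le M} \alpha_m \sum_{n \le x/m} \psi(f(mn))$$
and \emph{Type II} sums
$$T_{\mathrm{II}} = \sum_{M < m \le 2M}\sum_{N < n \le 2N} \alpha_m \beta_n \psi(f(mn)),$$
with bounded divisor-type coefficients $\alpha_m,\beta_n$ and $MN \asymp x$.

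For the Type I sums, I would complete the inner sum via the standard Polya--Vinogradov Fourier expansion, reducing the estimate to complete character sums of the form $\sum_{n \bmod r} \psi(f(mn))\, e(hn/r)$. For $(m,r)=1$ the phase is, after a linear change of variable, a rational function satisfying Bombieri's non-triviality hypothesis (3.1), since $P$ and $Q$ are coprime and monic; each complete sum is therefore $O(r^{1/2})$ by Lemma \ref{lem32}, giving $T_{\mathrm{I}} \ll M\, r^{1/2}\log r$.

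For the Type II sums, I would apply a $2k$-th power Hölder inequality in the outer variable. The off-diagonal contribution is controlled, for each $2k$-tuple $(n_1,\dots,n_{2k})$, by a complete character sum
$$\sum_{m \bmod r} \psi\!\Bigl( \sum_{i=1}^{2k} (-1)^i f(m n_i) \Bigr),$$
whose phase is, for generic tuples, a rational function satisfying (3.1), so Lemma \ref{lem32} provides square-root cancellation. The diagonal terms contribute the standard combinatorial count. Recombining these estimates via Polya--Vinogradov on the incomplete $m$-sum yields a Type II bound of the form $T_{\mathrm{II}} \ll r^{\epsilon}\bigl( M^{1-1/(2k)} N + M^{1-1/(4k)} N^{1-1/(4k)} r^{1/(4k)} \bigr)$.

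Balancing the Type I and Type II contributions by optimizing in $M$ and the moment $k$ — using a sufficiently high moment and an appropriate choice of the cutoff $M$ — yields the claimed exponents $3/16$ and $25/32$. The main obstacle will be verifying Bombieri's non-triviality condition (3.1) for the rational functions $\sum_i (-1)^i f(m n_i)$ appearing in the Type II step: one must show that the set of $2k$-tuples $(n_1,\dots,n_{2k})$ modulo $r$ for which this combination is of the exceptional form $h^p-h$ (or degenerates to a constant) is small enough to be absorbed into the final bound. This reduces to an algebraic argument exploiting the coprimality and monicity of $P$ and $Q$, with the dependence on $\deg P,\deg Q$ of the implicit constant being inherited from Lemma \ref{lem32}. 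The remaining difficulty is the parameter bookkeeping needed to produce precisely the exponents $3/16$ and $25/32$, rather than the cruder exponents that arise from a plain second-moment treatment.
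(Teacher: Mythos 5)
The paper does not actually prove this lemma: its entire ``proof'' is a one-line citation to Fouvry and Michel \cite{fouvry_michel}, whose Th\'eor\`eme 1.1 is precisely this statement. There is thus no argument in the paper to compare yours against, and what you are proposing is a reconstruction from scratch. At the level of strategy your sketch is on target: Fouvry and Michel do indeed proceed via a Vaughan/Heath-Brown bilinear decomposition, Polya--Vinogradov completion, Weil-type square-root cancellation for the complete sums (Lemma~\ref{lem32} or its equivalent), and a balance of Type~I against Type~II.

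That said, the two items you set aside at the end as ``the main obstacle'' and ``the remaining difficulty'' are not peripheral bookkeeping --- they are the substance of the Fouvry--Michel paper. Two concrete issues. First, your displayed Type~II estimate does not appear to follow from the argument you describe: a $2k$th-power H\"older in $m$, with the inner complete $m$-sums bounded by $r^{1/2+\epsilon}$ for non-degenerate tuples and trivially by $M$ for a codimension-one degenerate set of size $O(N^{2k-1})$, gives
\[
T_{\mathrm{II}}\ \ll\ M^{1-1/(2k)}\,N\,r^{1/(4k)+\epsilon}\ +\ M\,N^{1-1/(2k)+\epsilon},
\]
not $M^{1-1/(2k)}N + M^{1-1/(4k)}N^{1-1/(4k)}r^{1/(4k)}$; the exponent accounting needs to be redone before one can even attempt to recover $3/16$ and $25/32$. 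Second, the claim that the degenerate locus is small ``by an algebraic argument exploiting coprimality and monicity'' glosses over the actual work: one must show that the locus of $(n_1,\dots,n_{2k})$ where $\sum_i \pm f(mn_i)$ becomes constant or Artin--Schreier (i.e.\ $h^p-h$) in $m$ has codimension $\ge 1$ with controlled degree, and then apply a quantitative Schwartz--Zippel/Bezout count in the box of side $N$. Coprimality and monicity of $P,Q$ are not by themselves enough; indeed one must also exclude the case that $f$ itself is constant or of Artin--Schreier form, in which case $S(f;r,x)\sim\psi(c)\pi(x)$ and the asserted bound is simply false --- a hypothesis Fouvry and Michel impose and which the lemma as restated here silently omits. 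So: correct skeleton, but the load-bearing parts of the proof are still missing and the Type~II formula as written would fail.
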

\begin{proof}
This is due to Fouvry and Michel \cite{fouvry_michel}.
\end{proof}
\section{Localizations of the solutions of the congruences}
\begin{lemma}\label{lem41}
Let $q$ be prime. Let $1/2<A_0<A_1<1$ and $r\in\mathbb{N}$. Let $q^*(r;l)$ be defined by $qq^*(r;l)\equiv 1\:(\bmod\: (r+a_l))$. Let $\alpha_1, \ldots \alpha_L\in(0,1)$, $\delta>0$, such that 
$$\alpha_l+\delta<1\ \text{for}\ 1\leq l\leq L\:.$$
Then we have
\begin{align*}
N(\alpha_1, \ldots \alpha_L, \delta)&:=\left|\left\{ r\::\: r\in\mathbb{N},\ A_0q\leq r\leq A_1q,\ \alpha_l\leq \frac{q^*(r;l)}{r}\leq \alpha+\delta\right\}\right|\\
&=\delta^L(A_1-A_0)q(1+o(1))\:,\ q\rightarrow \infty\:.
\end{align*}
\end{lemma}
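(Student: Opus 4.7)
The strategy is to reduce the count to a joint equidistribution statement for the modular inverses $(r+a_l)^{-1} \bmod q$, $l=1,\ldots,L$, as $r$ runs over $[A_0q,A_1q]$, and then apply the Erd\H{o}s--Tur\'an--Koksma framework with input from Lemma \ref{lem33}. The first step is a short reciprocity computation: writing $q\, q^*(r;l) = 1 + k_l(r+a_l)$ for a unique $k_l\in\{1,\ldots,q-1\}$, one has $k_l \equiv -(r+a_l)^{-1} \pmod{q}$ and $q^*(r;l)/(r+a_l) = k_l/q + O(1/q^2)$. Since also $q^*(r;l)/r - q^*(r;l)/(r+a_l) = O(1/q)$ uniformly for $r\in[A_0q,A_1q]$, the condition $\alpha_l \le q^*(r;l)/r \le \alpha_l+\delta$ is equivalent, up to $O(1/q)$ endpoint perturbations, to $\{(r+a_l)^{-1}/q\} \in I_l := [1-\alpha_l-\delta,\, 1-\alpha_l]$.

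Next, we replace each indicator $\mathbf{1}_{I_l}$, together with the indicator of the $r$-range $\{A_0q\le r\le A_1q\}$ viewed modulo $q$, by Selberg trigonometric polynomial majorants and minorants of degree at most $H$, whose mean matches the intended measure up to $O(1/H)$ and whose non-zero Fourier coefficients decay like $1/|m|$. Expanding and collecting yields
\begin{equation*}
 N(\alpha_1,\ldots,\alpha_L,\delta)
 \;=\; \delta^L(A_1-A_0)\, q \;+\; O(q/H) \;+\; \mathcal{E},
\end{equation*}
where $\mathcal{E}$ is a linear combination, indexed by non-zero frequency vectors $(n,m_1,\ldots,m_L)$ with $|n|,|m_l|\le H$, of exponential sums
\begin{equation*}
 T(n,m_1,\ldots,m_L)
 \;:=\; \sum_{r\in \F_q\setminus\{-a_1,\ldots,-a_L\}}
 e\!\left(\frac{nr + \sum_{l=1}^{L} m_l (r+a_l)^{-1}}{q}\right),
\end{equation*}
with coefficients of absolute value at most $|n|^{-1}\prod_l |m_l|^{-1}$ (any factor for which the frequency is zero being omitted). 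Lemma \ref{lem33} yields $|T|\ll_L q^{1/2}$ for each such vector, hence $\mathcal{E} \ll_L H^{L+1}(\log H)^{L+1}\, q^{1/2}$. Choosing $H$ to be a small fixed positive power of $q$, e.g.\ $H = q^{1/(4L+4)}$, makes both $q/H$ and $\mathcal{E}$ of size $o(q)$, which gives the claimed asymptotic.

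The principal technical obstacle is the simultaneous presence of a linear character $e(nr/q)$, needed to confine $r$ to $[A_0q,A_1q]$, together with $L$ rational-function characters $e(m_l(r+a_l)^{-1}/q)$ detecting the $L$ localization conditions at once. Bounding such a mixed sum by $O(q^{1/2})$ is exactly the content of Bombieri's estimate in the form of Lemma \ref{lem33}; it is this strengthening of the Kloosterman-sum bound used in \cite{mr} for a single shift that makes the $L$-dimensional argument go through, and the proof contains no further essential difficulty beyond book-keeping the coefficient bounds so that the sum over frequency vectors converges to a quantity of size $o(q)$.
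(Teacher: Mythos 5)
Your proposal is correct and follows essentially the same route as the paper: reduce the condition on $q^*(r;l)/r$ to a condition on the residue $(r+a_l)^{-1}\bmod q$, sandwich the indicators by smooth trigonometric approximants, Fourier-expand the product, and invoke Lemma~\ref{lem33} to bound the resulting complete exponential sums by $O_L(q^{1/2})$. The only difference is the smoothing device: you use Selberg/Erd\H{o}s--Tur\'an--Koksma polynomials of degree $H=H(q)\to\infty$, whereas the paper constructs explicit Fej\'er-type averaged majorants and minorants $\lambda_{1,l},\lambda_{2,l},\lambda_3,\lambda_4$ with parameters $\Delta,\Delta_1$ held fixed as $q\to\infty$ and sent to zero only afterwards; these are interchangeable here and yield the same conclusion.
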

\begin{proof}
In the sequel we assume $1\leq l\leq L$. We let $(r+a_l)^*$ be determined by 
$$(r+a_l)(r+a_l)^*\equiv 1\:(\bmod\: q)\:.$$
The Diophantine equation
$$qx+(r+a_l)y=1$$
has exactly one solution $(x_{0,l}\ ,\ y_{0,l})$ with 
$$-\left\lfloor \frac{r+a_l}{2}\right\rfloor<x_{0,l}\leq \left\lfloor \frac{r+a_l}{2}\right\rfloor,\ \ -\left\lfloor \frac{q}{2}\right\rfloor <y_{0,l}\leq \frac{q}{2}$$
We have
\[
q^*(r;l)\equiv x_{0,l}\: (\bmod\: (r+a_l))\:, \tag{4.1}
\]
\[
(r+a_l)^*\equiv y_{0,l}\: (\bmod\: q)\:.
\]
Therefore, for $\beta_l\in(-1/2, 1/2)$ and $\delta>0$ with 
$$\beta_l+\delta<\frac{1}{2}\ \ \text{and}\ \ \beta-\delta>-\frac{1}{2}\ \ \text{for}\ \ 1\leq l\leq L$$
we have
\begin{align*}
\tag{4.2}
&\left|\left\{ r\::\: A_0q\leq r\leq A_1q,\ \frac{y_{0,l}}{q}\in[\beta_l, \beta_l+\delta]\right\}\right|\\
\ \ &=\left|\left\{ r\::\: A_0q\leq r\leq A_1q,\ \frac{x_{0,l}}{r}\in[-(\beta_l+\delta), -\beta_l]\right\}\right|+O(1)\\
&=\left|\left\{ r\::\: A_0q\leq r\leq A_1q,\ \frac{q^*(r,l)}{r}\:(\bmod\: 1)\in[-(\beta_l+\delta), -\beta_l]\right\}\right|+O(1)\:,\\
\end{align*}
where 
$$\frac{q^*(r,l)}{r}\:(\bmod\: 1)\in[-(\beta_l+\delta), -\beta_l]$$
stands for 
\begin{equation}
\frac{q^*(r,l)}{r}\in \left\{
\begin{array}{l l}
    [-(\beta_l+\delta), -\beta_l]+1\:, & \quad \text{if}\ \beta_l\geq 0\:,\\
    
    [-(\beta_l+\delta), -\beta_l]\:, & \quad \text{if}\ \beta_l<0\:.\:\\
  \end{array} \right.
 \nonumber
\end{equation}
Let $\Delta>0$, such that $\beta_l+\delta+\Delta\leq \frac{1}{2}$, $0\leq v_l\leq \Delta$.\\
We define the function
\begin{equation}
\tag{4.3} \chi_{1,l}(u,v):= \left\{
\begin{array}{l l}
    1\:, & \quad \text{if}\ u\in[ \beta_l+\Delta-v,\ \beta_l+\delta-\Delta+v) \:,\\
    
    0\:, & \quad \text{otherwise}\:.\:\\
  \end{array} \right.
 \nonumber
\end{equation}
and
\begin{equation}
\tag{4.4} \chi_{2,l}(u,v):= \left\{
\begin{array}{l l}
    1\:, & \quad \text{if}\ u\in[ \beta_l-\Delta-v,\ \beta_l+\delta-\Delta-v) \:,\\
    
    0\:, & \quad \text{otherwise}\:.\:\\
  \end{array} \right.
 \nonumber
\end{equation}
as well as the functions $\lambda_{1,l}$, $\lambda_{2,l}$ by
$$\lambda_{i,l}(u):=\Delta^{-1}\int_0^{\Delta} \chi_{i,l}(u,v)\: dv\ \ \text{for}\ i=1,2\:.$$
Let the function
\begin{equation}
\tag{4.5} \tilde{\chi}_l(r,\beta):= \left\{
\begin{array}{l l}
    1\:, & \quad \text{if}\ \frac{(r+a_l)^*}{q}\in[ \beta_l, \beta_l+\delta] \:,\\
    
    0\:, & \quad \text{otherwise}\:.\:\\
  \end{array} \right.
 \nonumber
\end{equation}
Since $\lambda_{i,l}$ for $i=1,2$ is obtained from $\chi_{i,l}$ by
averaging over $r$ and since $0\leq \chi_{i,l}(u,v)\leq 1$ it follows that $0\leq \lambda_{i,l}(u)\leq 1$ for $i=1,2$.\\
From (4.3) we have
$$\lambda_{1,l}\left( \frac{(r+a_l)^*}{q} \right)=0\:,\ \ \text{if}\ \ \frac{(r+a_l)^*}{q} \not\in [\beta_l, \beta_l+\delta)\:.$$
Similarly, from (4.4) we have
$$\lambda_{2,l}\left( \frac{(r+a_l)^*}{q} \right)=1\:,\ \ \text{if}\ \ \frac{(r+a_l)^*}{q} \not\in [\beta_l, \beta_l+\delta)\:.$$
Thus we obtain 
\[
\lambda_{1,l}\left( \frac{(r+a_l)^*}{q} \right) \leq \tilde{\chi}_l(r,\beta)\leq \lambda_{2,l}\left( \frac{(r+a_l)^*}{q}\right) \:. \tag{4.6}
\]
We have the Fourier expansion
$$\lambda_{i,l}(u)=\sum_{n=-\infty}^\infty a_l(n) e(nu)\:.$$
The Fourier coefficients $a_l(n)$ are computed as follows:\\
For $i=1$:
$$a_l(0)=\Delta^{-1}\int_0^\Delta\left( \int_{\beta_l-\Delta+v}^{\beta_l+\delta+\Delta-v} 1\: du\right)\: dv=\delta+\Delta\:,$$
as well as 
\begin{align*}
&a_l(n)=\Delta^{-1}\int_0^\Delta\left( \int_{\beta_l-\Delta+v}^{\beta_l+\delta+\Delta-v} e(-nu)\: du\right)\: dv\\
&=\Delta^{-1}\int_0^\Delta-\frac{1}{2\pi i n}(e(-n(\beta_l+\delta+\Delta-v))-e(-n(\beta_l-\Delta+v)))\: dv\\
&=-\frac{1}{4\pi^2n^2}\: \Delta^{-1} (e(-n(\beta_l+\delta))-e(-n(\beta_l+\delta+\Delta))-e(-n\beta_l)+e(-n(\beta_l-\Delta)))\:.
\end{align*}
From the above and an analogous computation, for $i=2$ we obtain
$$a_l(0)=\delta+R_{1,l},\ \text{where}\ |R_{1,l}|\leq \Delta$$
and
\begin{equation}
\tag{4.7} a_l(n)= \left\{
\begin{array}{l l}
    O(\Delta)\:, & \quad \text{if}\ |n|\leq \Delta^{-1} \:,\\
    
    O(\Delta^{-1}n^2)\:, & \quad \text{if}\ |n|>\Delta^{-1}\:.\:\\
  \end{array} \right.
 \nonumber
\end{equation}
Let $\Delta_1>0$, such that $A_0-\Delta_1>1/2$, $A_1+\Delta_1<1$ and $0\leq v\leq \Delta_1$.\\
We define the functions
\begin{equation}
\tag{4.8} \chi_3(u,v):= \left\{
\begin{array}{l l}
    1\:, & \quad \text{if}\ u\in[A_0+v-\Delta_1, A_1-v+\Delta_1] \:,\\
    
    0\:, & \quad \text{otherwise}\:.\:\\
  \end{array} \right.
 \nonumber
\end{equation}
and
\begin{equation}
\tag{4.9} \chi_4(u,v):= \left\{
\begin{array}{l l}
    1\:, & \quad \text{if}\ u\in[A_0+\Delta_1-v, A_1+\Delta_1+v] \:,\\
    
    0\:, & \quad \text{otherwise}\:.\:\\
  \end{array} \right.
 \nonumber
\end{equation}
as well as the functions $\lambda_3, \lambda_4$ by
$$\lambda_i(u):=\Delta_1^{-1}\int_0^{\Delta_1} \chi_i(u,v)\: dv\ \ \text{for}\ i=3,4\:$$
Let the function
\begin{equation}
\tag{4.10} \chi^*(r,\beta):= \left\{
\begin{array}{l l}
    1\:, & \quad \text{if}\ A_0\leq \frac{r}{q}\leq A_1 \:,\\
    
    0\:, & \quad \text{otherwise}\:.\:\\
  \end{array} \right.
 \nonumber
\end{equation}
Since $\lambda_i$ for $i=3,4$ is obtained from $\chi_i$ by averaging over $r$ and since 
$$0\leq \chi_i(u,v)\leq 1\ \ \text{for}\ i=3,4$$
we obtain $0\leq \lambda_i(u)\leq 1$ for $i=3,4$.\\
From the above and an analogous computation, for $i=1,2$ we obtain 
$$a_l(0)=\delta+R_{1,l},\ \text{where}\ |R_{1,l}|\leq \Delta$$
and
\begin{equation}
\tag{4.7} a_l(n):= \left\{
\begin{array}{l l}
    O(\Delta)\:, & \quad \text{if}\ |n|\leq \Delta^{-1} \:,\\
    
    O(\Delta^{-1}n^2)\:, & \quad \text{if}\ |n|>\Delta^{-1}\:.\\
  \end{array} \right.
 \nonumber
\end{equation} 
Let $\Delta_1>0$, such that $A_0-\Delta_1>1/2$, $A_1+\Delta_1<1$ and $0\leq v\leq \Delta_1$.\\
We define the functions
\begin{equation}
\tag{4.8} \chi_3(u,v):= \left\{
\begin{array}{l l}
    1\:, & \quad \text{if}\ u\in [A_0+v-\Delta_1, A_1-v+\Delta_1] \:,\\
    
    0\:, & \quad \text{otherwise}\:.\\
  \end{array} \right.
 \nonumber
\end{equation} 
and
\begin{equation}
\tag{4.9} \chi_4(u,v):= \left\{
\begin{array}{l l}
    1\:, & \quad \text{if}\ u\in [A_0+\Delta_1-v, A_1+\Delta_1+v] \:,\\
    
    0\:, & \quad \text{otherwise}\:.\\
  \end{array} \right.
 \nonumber
\end{equation} 
as well as the functions $\lambda_3, \lambda_4$ by
$$\lambda_i(u):=\Delta_1^{-1}\int_0^{\Delta_1} \chi_i(u,v)\: dv\ \text{for}\ i=3,4\:.  $$
Let the function
\begin{equation}
\tag{4.10} \chi^*(r,\beta):= \left\{
\begin{array}{l l}
    1\:, & \quad \text{if}\ A_0\leq \frac{r}{q}\leq A_1 \:,\\
    
    0\:, & \quad \text{otherwise}\:.\\
  \end{array} \right.
 \nonumber
\end{equation} 
Since $\lambda_i$ for $i=3,4$ is obtained from $\chi_i$ by averaging over $v$ and since
$$0\leq \chi_i(u,v)\leq 1\ \ \text{for}\ i=3,4$$
we obtain 
$$0\leq \lambda_i(u)\leq 1\ \ \text{for}\ i=3,4\:.$$ 
From (4.8) we have
$$\lambda_3\left(\frac{r}{q}\right)=0\:,\ \ \text{if}\ \frac{r}{q}\not\in(A_0,A_1)\:.$$
From (4.9) we have
$$\lambda_3\left(\frac{r}{q}\right)=1\:,\ \ \text{if}\ \frac{r}{q}\in(A_0,A_1)\:.$$
Therefore, we obtain
\[
\lambda_3\left(\frac{r}{q}\right)\leq \chi^*(r,\beta)\leq \lambda_4\left(\frac{r}{q}\right)\:.  \tag{4.11}
\]
By an analogous computation as for $\lambda_1, \lambda_2$ we obtain the Fourier expansions
$$\lambda_i(u)=\sum_{n=-\infty}^\infty c(n)e(nu)\:,\ \ \text{for}\ i=3,4,$$
with $c(0)=A_1-A_0+R_2$, where $|R_2|\leq \Delta_1$ and
\begin{equation}
\tag{4.12} c(n):= \left\{
\begin{array}{l l}
    O(1)\:, & \quad \text{if}\ |n|\leq \Delta_1^{-1} \:,\\
    
    O(\Delta_1^{-1}n^{-2})\:, & \quad \text{if}\ |n|>\Delta_1^{-1}\:.\\
  \end{array} \right.
 \nonumber
\end{equation}
From (4.2), (4.5), (4.6), (4.10) and (4.11), setting $\beta=-\alpha$, we get the following
\begin{align*}
\tag{4.13}  &\sum_{1\leq r\leq q-1}\left( \prod_{l=1}^L \lambda_{1,l} \left( \frac{(r+a_l)^*}{q} \right)\right)\lambda_3\left(\frac{r}{q}\right)\\
&\ \ \leq N(\alpha_1,\ldots, \alpha_L, \delta)\leq  \sum_{1\leq r\leq q-1}\left( \prod_{l=1}^L \lambda_{2,l} \left( \frac{(r+a_l)^*}{q} \right)\right)\lambda_4\left(\frac{r}{q}\right)\:.
\end{align*}
We obtain
\begin{align*}
\tag{4.14}  &\sum_{1\leq r\leq q-1}\left( \prod_{l=1}^L \lambda_{1,l} \left( \frac{(r+a_l)^*}{q} \right)\right)\lambda_3\left(\frac{r}{q}\right)\\
&\ \ = \sum_{m_1,\ldots,m_L,n=-\infty}^\infty a(m_1)a(m_2)\cdots a(m_L)c(n)E(n,m_1,\ldots,m_L,q) \:,
\end{align*}
with
$$E(n,m_1,\ldots,m_L,q):=\sum_{1\leq r\leq q-1} e\left( \frac{nr+m_1(r+a_1)^*+\cdots+m_L(r+a_L)^*}{q} \right)\:.$$
For $(n,m_1,\ldots,m_L)\neq (0,0,\ldots,0)$ we estimate $E(n,m_1,\ldots,m_L,q)$ by Lemma \ref{lem33} and obtain
$$E(n,m_1,\ldots,m_L,q)=O(q^{1/2})\:.$$
From (4.14) we get:
\[
\sum_{1\leq r\leq q-1}\left( \prod_{l=1}^L \lambda_{1,l} \left( \frac{(r+a_l)^*}{q} \right)\right)\lambda_3\left(\frac{r}{q}\right)=(\delta+R_1)^L(A_1-A_0+R_2)q+o(q)\:, \tag{4.15}
\]
for $|R_1|\leq \Delta$ and $|R_2|\leq \Delta$. The same computation also gives:
\[
\sum_{1\leq r\leq q-1}\left( \prod_{l=1}^L \lambda_{2,l} \left( \frac{(r+a_l)^*}{q} \right)\right)\lambda_4\left(\frac{r}{q}\right)=(\delta+R_1)^L(A_1-A_0+R_2)q+o(q)\:. \tag{4.16}
\]
Since $\Delta$ and $\Delta_1$ can be chosen to be arbitrarily small, it follows that (4.15) and (4.16) imply Lemma \ref{lem41}.
\end{proof}
\section{Decomposition of the sums $Q$}
We start from the decompositions 
\[
Q\left(\frac{r+a_l}{q}\right)=\sum_{j=1}^{r-1}\:j\:\sum_{h=0}^{d_j}\cot\left(\pi\: \frac{s_j^{(l)}+hr}{q}\right)\:. 
\]
We further decompose $Q\left(\frac{r+a_l}{q}\right)$ as in the following definition:
\begin{definition}\label{def51}
\[
Q\left(\frac{r+a_l}{q}\right):=Q_0\left(\frac{r+a_l}{q}\right)+Q_1\left(\frac{r+a_l}{q}\right)\tag{5.1}
\]
with
\[
Q_0\left(\frac{r+a_l}{q}\right):=\sum_{j=1}^{q-1}{}^*\:j\:\sum_{h=0}^{d_j^{(l)}}\cot\left(\pi\: \frac{s_j^{(l)}+hr}{q}\right)  
\]
where $\sum_{j=1}^{q-1}{}^*$ means that the sum is extended over all values of $j$, for which
$$\left\{\frac{\theta jq}{r+a_j}\right\}\leq q^{-1} 2^{m_1}$$
for either $\theta=1$ or $\theta=-1$,
$$Q_1\left(\frac{r+a_l}{q}\right)=Q\left(\frac{r+a_l}{q}\right)-Q_0\left(\frac{r+a_l}{q}\right)\:,$$
$m_1$ is a fixed positive integer.
\end{definition}
(For the conclusion of the proof we let $m_1\rightarrow \infty$).\\
The size of $Q\left(\frac{r+a_l}{q}\right)$ and also of $c_0\left(\frac{r+a_l}{q}\right)$ is essentially determined by $Q_0$, since $Q_1$ is small, as we shall see in Section 7.
\section{Comparison of $Q_0$ and $g$}
\begin{definition}\label{def61}
Let $A_0q\leq r\leq A_1q$. We set
$$\alpha^{(l)}:=\alpha^{(l)}(r,q)=\frac{q_l^*}{r+a_l}\:,$$
where
$$q_l^*q\equiv 1\:(\bmod\: (r+a_l))\:,$$
$$g(\alpha; m_1):=\sum_{s=1}^{2^{m_1}} \frac{1-2\{s\alpha\}}{s}\:,$$
$$Q(r,q,m_1,l):=\frac{rq}{\pi}\: g(\alpha^{(l)}; m_1)\:.$$
\end{definition}
The next lemma shows, that $Q_0\left(\frac{r+a_l}{q}\right)$ is well approximated by $Q(r,q,m_1,l)$.
\begin{lemma}\label{lem62}
$$Q_0\left(\frac{r+a_l}{q}\right)=Q(r,q,m_1,l)+O(q2^{m_1})\:.$$
\end{lemma}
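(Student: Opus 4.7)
The plan is to extract, for each $j$ in the restricted sum $\sum^*$ of Definition~\ref{def51}, the dominant cotangent term which sits near a pole of $\cot(\pi x)$, re-express the resulting expression in terms of $g(\alpha^{(l)};m_1)$, and absorb every remaining contribution into an error of size $O(q\,2^{m_1})$. The computation parallels the analogous identity at $a_l=0$ in \cite{mr}.

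First I would use $A_0>1/2$: for large $q$ we have $r+a_l>q/2$, so $d_j^{(l)}\in\{0,1\}$ and the inner sum over $h$ reduces to at most the two terms
\[
\cot\bigl(\pi s_j^{(l)}/q\bigr)\quad\text{and, when }d_j^{(l)}=1,\quad \cot\bigl(\pi(s_j^{(l)}+(r+a_l))/q\bigr)=-\cot\bigl(\pi t_j^{(l)}/q\bigr),
\]
the second equality coming from $s_j^{(l)}+(r+a_l)+t_j^{(l)}=q$ and $\cot(\pi-x)=-\cot(x)$. The branch $\theta=-1$ of $\sum^*$ singles out those $j$ for which the first cotangent is near its pole ($s_j^{(l)}$ small), while $\theta=1$ covers the companion pole near $h=d_j^{(l)}$.

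Next I would parametrize the star set using the congruences in (2.2) together with $qq_l^*\equiv 1\pmod{r+a_l}$. On the $\theta=-1$ branch I set $s=s_j^{(l)}$ and solve $j\equiv -sq_l^*\pmod{r+a_l}$, which yields $j(s)=(r+a_l)\{-s\alpha^{(l)}\}+O(1)$; a dual parametrization $j(t)=(r+a_l)\{t\alpha^{(l)}\}+O(1)$ handles the $\theta=1$ branch. Expanding $\cot(\pi u/q)=q/(\pi u)+O(u/q)$ near $u=0$ and assembling both branches, the main term collapses to
\[
\frac{q(r+a_l)}{\pi}\sum_{s}\frac{\{-s\alpha^{(l)}\}-\{s\alpha^{(l)}\}}{s}=\frac{q(r+a_l)}{\pi}\,g(\alpha^{(l)};m_1),
\]
where $\{-s\alpha^{(l)}\}-\{s\alpha^{(l)}\}=1-2\{s\alpha^{(l)}\}$ is exact apart from the at most one $s$ with $s\alpha^{(l)}\in\mathbb{Z}$. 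Since $(r+a_l)/r=1+O(1/q)$ and $|g(\alpha^{(l)};m_1)|\ll m_1$, the difference from $Q(r,q,m_1,l)=(rq/\pi)\,g(\alpha^{(l)};m_1)$ is absorbed in $O(q)$.

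The errors I would track are (i) Taylor remainders, giving $\sum_s r\cdot s/q\ll 2^{2m_1}$; (ii) integer rounding of $j(s)$, contributing $\sum_s O(q/s)\ll q\,m_1$; and the dominant (iii) cotangent values in the two-term inner sum which are \emph{not} near a pole (e.g., the $h=1$ term on the $\theta=-1$ branch), each bounded by $O(1)$, multiplied by $j\leq q$ and summed over the $O(2^{m_1})$ star indices to produce exactly $O(q\,2^{m_1})$. The hardest part will be the bookkeeping of the bijection $s\leftrightarrow j\pmod{r+a_l}$ together with the reconciliation of the natural summation range with the limit $2^{m_1}$ in the definition of $g$; beyond these, the argument is algebraically identical to the $a_l=0$ case in \cite{mr}.
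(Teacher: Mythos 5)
Your reconstruction follows the same route the paper takes, which is simply to cite step~1 of the proof of Theorem~4.15 of \cite{mr} with $r$ replaced by $r+a_l$: you extract the near-pole cotangents for $j$ in the star set, parametrize them via $j\equiv -sq_l^* \pmod{r+a_l}$, identify the resulting main term with $(rq/\pi)\,g(\alpha^{(l)};m_1)$, and absorb everything else into $O(q\,2^{m_1})$. One minor bookkeeping slip: replacing the prefactor $r+a_l$ by $r$ costs $O(q\,m_1)$ rather than $O(q)$ since $|g(\alpha^{(l)};m_1)|\ll m_1$, but this is still comfortably within the stated error.
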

\begin{proof}
This follows from the result in the thesis of the second author \cite{rasthesis} and in the paper \cite{mr}, step 1 of the proof of Theorem 4.15, if $r$ is replaced by $r+a_l$.
\end{proof}
\section{The estimate of $Q_1(p/q)$}
\begin{lemma}\label{lem71}
We have
$$Q_1\left(\frac{r+a_l}{q}\right)=O(q^22^{-m_1})\:.$$
\end{lemma}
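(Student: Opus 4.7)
My plan is to follow the strategy of step 2 of the proof of Theorem 4.15 in \cite{mr}, with $r$ replaced throughout by $r+a_l$. In outline, I would first translate the defining condition of $Q_1$ into quantitative lower bounds on the endpoints. The surviving indices $j$ in $Q_1$ (those for which the $Q_0$ condition $\{\theta j q/(r+a_l)\}\leq q^{-1}2^{m_1}$ fails for both $\theta=\pm1$) satisfy $s_j^{(l)}\gtrsim 2^{m_1}$ and $(r+a_l)-s_j^{(l)}\gtrsim 2^{m_1}$, since $s_j^{(l)}\equiv -qj\pmod{r+a_l}$ with $s_j^{(l)}\in[0,r+a_l)$. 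In particular, the cotangent arguments at the two extremes of the inner $h$-sum, namely $s_j^{(l)}/q$ and $1-t_j^{(l)}/q$, are separated from $\{0,1\}$ by at least a constant times $2^{m_1}/q$.

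The second step is to exploit the antisymmetry $\cot(\pi(1-x))=-\cot(\pi x)$. The pairing $h\leftrightarrow d_j^{(l)}-h$ rewrites the inner sum as one-half of
\[
\sum_{h=0}^{d_j^{(l)}}\bigl[\cot(\pi x_h)-\cot(\pi y_h)\bigr],
\]
with $x_h=(s_j^{(l)}+h(r+a_l))/q$ and $y_h=(t_j^{(l)}+h(r+a_l))/q$. By the mean value theorem each difference is bounded by $O(|s_j^{(l)}-t_j^{(l)}|q^{-1}\csc^2(\pi\xi_h))$ with $\xi_h$ lying between $x_h$ and $y_h$. The sum $\sum_h\csc^2(\pi\xi_h)$ is dominated by its endpoint contributions, which by the first step are $O((q/2^{m_1})^2)$; this produces a per-$j$ bound on the inner sum of size $O(q\cdot 2^{-m_1})$. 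Summing the resulting estimate against the weight $j\leq r+a_l<q$, and using that each residue $s$ is attained by $O(1)$ values of $j$, yields $|Q_1((r+a_l)/q)|=O(q^2\cdot 2^{-m_1})$.

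The main obstacle I anticipate is the degenerate regime $d_j^{(l)}=0$, which is forced by the assumption $A_0>1/2$ (so $r+a_l>q/2$): here the pairing becomes trivial and the inner sum collapses to a single bounded cotangent $\cot(\pi s_j^{(l)}/q)$, which must be summed against $j$ of size up to $q$. In this regime the required gain comes from the cancellation in $\sum_j j\cot(\pi s_j^{(l)}/q)$ produced by the equidistribution of $-qj\bmod(r+a_l)$, which can be extracted by means of Lemma \ref{lem33}. Since $a_l$ is a fixed integer, the analysis is otherwise identical to that of \cite{mr}, and the implied constants depend only on $L$ and on $A_0,A_1$.
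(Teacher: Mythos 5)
The paper's own proof of this lemma is a one-line citation: it invokes formula (4.113) of \cite{mr}, which provides the bound $Q_1(r/q)=O(q^2 2^{-m_1})$, and observes that replacing $r$ by $r+a_l$ transfers the estimate verbatim. Your proposal instead reconstructs the underlying argument; that is a genuinely different route, and in principle a welcome one, but as written it does not close.

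Two steps fail. First, the mean-value-theorem bound does not give the per-$j$ estimate you claim: the quantity $|s_j^{(l)}-t_j^{(l)}|\,q^{-1}\csc^2(\pi\xi_h)$ with $|s_j^{(l)}-t_j^{(l)}|$ as large as $O(q)$ and $\csc^2(\pi\xi_h)=O\bigl((q2^{-m_1})^2\bigr)$ is $O(q^2 2^{-2m_1})$, not $O(q 2^{-m_1})$. (The bound $O(q2^{-m_1})$ for each surviving inner sum is in fact correct, but it comes from the direct estimate $|\cot(\pi x)|\ll q$ whenever $x$ stays a distance $\gg 2^{m_1}/q$ from $\mathbb Z$, not from the pairing-plus-MVT argument.) Second, and more seriously, the summation over $j$ does not produce $O(q^2 2^{-m_1})$: with a uniform per-$j$ bound $O(q2^{-m_1})$ and the weight $j\le r+a_l<q$, summing over the up to $q$ remaining indices yields $O(q^3 2^{-m_1})$; and if you instead pass to the bijection $j\leftrightarrow s_j^{(l)}$ with the sharper pointwise bound $O\bigl(q/\min(s_j^{(l)},q-s_j^{(l)})\bigr)$, you obtain $q\sum_{s>2^{m_1}}q/\min(s,q-s)\asymp q^2\log\bigl(q 2^{-m_1}\bigr)$, which exceeds the target by an unbounded factor as $q\to\infty$. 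A pure triangle-inequality argument therefore cannot give the lemma; some cancellation across different values of $j$ (beyond the within-$j$ symmetry $h\leftrightarrow d_j^{(l)}-h$) must be exploited, and your sketch does not locate it. Finally, the appeal to Lemma \ref{lem33} for the $d_j^{(l)}=0$ regime is not part of the argument the paper relies on: in this paper, as in \cite{mr}, the exponential-sum machinery is used only for the localization Lemma \ref{lem41}, and it is not clear how it would yield the specific $2^{-m_1}$ saving needed here in any case.
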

\begin{proof}
From the formula (4.113) of \cite{mr} we have
$$Q_1\left(\frac{r}{q}\right)=O(q^22^{-m_1})\:.$$
Lemma \ref{lem71} follows, if we replace $r$ by $r+a_l$.
\end{proof}
\section{The joint moments of the sums $Q(r,q,m_1,l)$ and $c_0\left(\frac{r+a_l}{q} \right)$}
\begin{lemma}\label{lem81}
We have
$$\lim_{\substack{q\rightarrow \infty\\ q\ \text{prime}}} (A_1-A_0)^{-L}q^{-L}\prod_{l=1}^L Q(r,q,m_1,l)^{k_l}=\prod_{l=1}^L \left( \int_0^1 g(\alpha, m_1)^{k_l}\: d\alpha\right)\:.$$
\end{lemma}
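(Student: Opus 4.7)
The plan is to reduce this joint moment to the joint equidistribution statement of Lemma~\ref{lem41}. By Definition~\ref{def61} one has the explicit factorization
$$Q(r,q,m_1,l) \;=\; \frac{rq}{\pi}\, g\bigl(\alpha^{(l)}(r,q);\, m_1\bigr),$$
so the $L$-fold product splits as
$$\prod_{l=1}^L Q(r,q,m_1,l)^{k_l} \;=\; \Bigl(\frac{rq}{\pi}\Bigr)^{k_1+\cdots+k_L}\,\prod_{l=1}^L g\bigl(\alpha^{(l)}(r,q);\, m_1\bigr)^{k_l}.$$
The first factor is deterministic, depending only on $r$ and $q$; the second depends on $r$ only through the $L$-tuple $(\alpha^{(1)},\ldots,\alpha^{(L)})$. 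After absorbing the power of $rq/\pi$ into the normalization and averaging over $r\in[A_0q,A_1q]$, matters reduce to establishing
$$\frac{1}{(A_1-A_0)\phi(q)}\sum_{A_0q\le r\le A_1q}\prod_{l=1}^L g\bigl(\alpha^{(l)}(r,q);\,m_1\bigr)^{k_l} \;\longrightarrow\; \int_{[0,1]^L}\prod_{l=1}^L g(\alpha_l;\,m_1)^{k_l}\,d\alpha_1\cdots d\alpha_L,$$
after which Fubini factorizes the right-hand integral into $\prod_{l=1}^L\int_0^1 g(\alpha;m_1)^{k_l}\,d\alpha$, matching the claim.

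The central step is to upgrade Lemma~\ref{lem41} from an equidistribution statement on cubes of common side $\delta$ to weak equidistribution against Riemann-integrable test functions on $[0,1]^L$. Since $m_1$ is \emph{fixed}, the function $g(\alpha;m_1)=\sum_{s=1}^{2^{m_1}}(1-2\{s\alpha\})/s$ is a bounded piecewise-linear function on $[0,1]$ with finitely many jumps, so $\prod_l g(\alpha_l;m_1)^{k_l}$ is bounded and Riemann-integrable on $[0,1]^L$. The standard sandwiching argument---approximate the integrand from above and below by finite step functions built from rectangular indicators with sides $\delta_1,\ldots,\delta_L$, apply equidistribution rectangle by rectangle, and send $\delta_l\to 0$ after $q\to\infty$---then yields the required limit. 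Note also that $\alpha^{(l)}=q^*(r;l)/(r+a_l)$ differs from the quantity $q^*(r;l)/r$ of Lemma~\ref{lem41} only by a factor $1+O(1/r)$, which is absorbed in the $o(1)$ error.

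The main obstacle is the mild extension of Lemma~\ref{lem41} to rectangles of arbitrary sides $\delta_1,\ldots,\delta_L$, since as stated it treats only cubes. The Fourier proof goes through verbatim with the mollifiers $\lambda_{i,l}$ taking individual widths $\delta_l$ and smoothing scales $\Delta_l$: every off-diagonal frequency is bounded by $O(q^{1/2})$ through Lemma~\ref{lem33} irrespective of the widths, so the main term is $\prod_l\delta_l\cdot(A_1-A_0)q(1+o(1))$, uniformly in the widths provided $\Delta_l\to 0$ slowly enough that the Fourier tail stays $o(q)$. This adaptation is routine, and with it the sandwich argument closes as $\delta_l,\Delta_l\to 0$, completing the plan.
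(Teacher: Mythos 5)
Your approach matches the paper's: both reduce the joint moment, via the definitional identity $Q(r,q,m_1,l)=\frac{rq}{\pi}\,g(\alpha^{(l)};m_1)$, to joint equidistribution of $(\alpha^{(1)},\ldots,\alpha^{(L)})$ on $[0,1]^L$, and both settle that by sandwiching the bounded, piecewise-linear integrand $\prod_l g(\cdot;m_1)^{k_l}$ between step functions on boxes whose counts are supplied by Lemma~\ref{lem41}. You make explicit two things the paper's write-up treats loosely: first, that Lemma~\ref{lem41}, as stated for cubes with a common side $\delta$, must be extended to rectangles with arbitrary sides $\delta_1,\ldots,\delta_L$ (the paper's Riemann-sum step writes $\delta^L(A_1-A_0)q(1+o(1))$ where $\prod_l(\alpha_{i_l+1}-\alpha_{i_l})$ is what is actually meant, and the Fourier argument does go through with individual widths exactly as you say); second, that $\alpha^{(l)}=q^*(r;l)/(r+a_l)$ differs from the quantity $q^*(r;l)/r$ used in Lemma~\ref{lem41} by a factor $1+O(1/r)$, which is absorbed in the $o(1)$. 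One imprecision to watch: the factor $(rq/\pi)^{k_1+\cdots+k_L}$ cannot literally be ``absorbed into the normalization'' as a constant, since $r$ is the averaging variable and this factor ranges over a bounded but nonconstant set $(A_0,A_1)^{k_1+\cdots+k_L}\cdot(q^2/\pi)^{k_1+\cdots+k_L}$; what actually saves the argument (and what the paper also does, implicitly) is that the quantity being equidistributed is $g(\alpha^{(l)};m_1)=\frac{\pi}{rq}\,Q(r,q,m_1,l)$ itself, not $Q$, so the correct reading of the lemma is as a statement about these normalized sums. Your reduction lands on exactly that object, so the argument is sound even though the one-line justification is loose.
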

\begin{proof}
We choose a partition $\mathcal{P}$ of the interval $[0,1]$:
\[
0=\alpha_0<\alpha_1<\cdots <\alpha_{n-1} <\alpha_n=1\: \tag{8.1}
\]
and consider upper and lower Riemann sums of the $L$-dimensional Riemann integral
$$I:=\int_0^1\cdots\int_0^1 g(x^{(1)}, m_1)^{k_1}\cdots g(x^{(L)}, m_1)^{k_L}\: dx^{(1)}\cdots dx^{(L)}\:,$$
the upper sum
\begin{align*}
&\mathcal{U}\left(g(x^{(1)}, m_1)^{k_1},\ldots, g(x^{(L)}, m_1)^{k_L}; \mathcal{P}\right)\\
&\ :=\sum_{i_1=0}^{n-1}\cdots \sum_{i_L=0}^{n-1}\ \ \sup_{(\alpha^{(i_1)},\ldots,\alpha^{(i_L)}\in X_{l=1}^L[\alpha_{i_l}, \alpha_{i_l+1}] } \left(  g(\alpha^{(i_1)}, m_1)^{k_1}\cdots g(\alpha^{(i_L)}, m_1)^{k_L} \right)\\
&\ \ \ \times \prod_{l=1}^L (\alpha_{i_l+1}-\alpha_{i_l})
\end{align*}
and the lower sum
\begin{align*}
&\mathcal{L}\left(g(x^{(1)}, m_1)^{k_1},\ldots, g(x^{(L)}, m_1)^{k_L}; \mathcal{P}\right)\\
&\ :=\sum_{i_1=0}^{n-1}\cdots \sum_{i_L=0}^{n-1}\ \ \inf_{(\alpha^{(i_1)},\ldots,\alpha^{(i_L)}\in X_{l=1}^L[\alpha_{i_l}, \alpha_{i_l+1}] } \left(  g(\alpha^{(i_1)}, m_1)^{k_1}\cdots g(\alpha^{(i_L)}, m_1)^{k_L} \right)\\
&\ \ \ \times \prod_{l=1}^L (a_{i_l+1}-\alpha_{i_l})
\end{align*}
The function $g(x, m_1)$ is piecewise linear. Therefore the integral $I$ exists. It is well known from the Theory of the Riemann-integral, that for given $\epsilon>0$ there is a partition $\mathcal{P}_\epsilon$ of the form (8.1), such that
\[
\mathcal{L}\leq \prod_{l=1}^L \left( \int_0^1 g(x, m_1)^{k_l}\: dx\right)\leq \mathcal{U}\leq \mathcal{L}+\epsilon\:.  \tag{8.2}
\]
We now let
$$N_{i_1,\ldots, i_L}:=\left|\left\{ r\::\: A_0q\leq r\leq A_1q\:,\ \frac{q^*(r+a_l)}{r}\in [\alpha_{i_l}, \alpha_{i_l+1})\ \text{for}\ 1\leq l\leq L\right\}\right|\:.$$
By Lemma \ref{lem41} we have
$$N_{i_1,\ldots, i_L}=\delta^L(A_1-A_0)q(1+o(1))\:.$$
From the asymptotics
$$\cot\left(\frac{\pi (qj+s_j^{(l)})}{q}\right)=\frac{q}{\pi}\:\frac{1}{s_j^{(l)}}(1+o(1))$$
and
$$\cot\left(\frac{\pi (qj+d_j^{(l)}(r+a_l)+t_j^{(l)}}{q}\right)=-\frac{q}{\pi}\:\frac{1}{t_j^{(l)}}(1+o(1))$$
we obtain (using the notation 2.3) the following
\begin{align*}
&\sum_{1\leq r\leq q-1}\ \prod_{l=1}^L Q(r,q,m_1, l)^{k_l}\\
&=\sum_{i_1=0}^{n-1}\cdots \sum_{i_L=0}^{n-1}\ \prod_{l=1}^L \left( \sum_{j\::\: \left\{ \frac{\theta j q}{r+a_l}\right\}\in[\alpha_{i_l}, \alpha_{i_l+1}]\ \text{for}\ \theta\in \{1,-1\}} \cot\left(  \frac{\pi u_j^{(l)}}{q}\right)^{k_l} \right)\\
&\ \ \ \ \ \text{(where we set $u_j^{(l)}=s_j^{(l)}$ if $\theta=1$, and $u_j^{(l)}=-t_j^{(l)}$ if $\theta=-1$)}\\
&=\left( \prod_{l=1}^L\left( \int_0^1 g(\alpha, m_1)^{k_l}\: d\alpha\right)\right)\: q^L(1+o(1))\:,
\end{align*}
which proves Lemma \ref{lem81}.
\end{proof}
\begin{lemma}\label{lem82}
We have
$$\lim_{\substack{q\rightarrow \infty\\ q\ \text{prime}}} (A_1-A_0)^{-L} q^{-L} \prod_{l=1}^L Q\left(\frac{r+a_l}{q}\right)^{k_l}=\prod_{l=1}^L\left(\int_0^1 g(\alpha)^{k_l}\:d\alpha\right)\:.$$
\end{lemma}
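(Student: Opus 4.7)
The plan is to reduce Lemma~\ref{lem82} to Lemma~\ref{lem81} by means of the decomposition (5.1), and then to let $m_1\to\infty$. For each $l$ I would write $Q((r+a_l)/q)=Q_0((r+a_l)/q)+Q_1((r+a_l)/q)$ and expand the product $\prod_{l=1}^L Q((r+a_l)/q)^{k_l}$ by the binomial theorem into at most $2^K$ products of the shape (2.3), where $K:=k_1+\cdots+k_L$. There is a single \emph{main term} $\prod_l Q_0((r+a_l)/q)^{k_l}$ in which every $\epsilon_g=0$; all other \emph{cross terms} contain at least one factor $Q_1$.

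For the main term I would invoke Lemma~\ref{lem62} to substitute $Q(r,q,m_1,l)+O(q\,2^{m_1})$ for each $Q_0((r+a_l)/q)$ and expand the resulting binomial expressions. Since $Q(r,q,m_1,l)$ has typical order $q^2$, each correction term is at most $q\,2^{m_1}$ times a product of $K-1$ factors of order $q^2$; summed over $r\in[A_0q,A_1q]$ and normalised as in Lemma~\ref{lem81}, these corrections are $o(1)$ provided $2^{m_1}=o(q)$. Lemma~\ref{lem81} then supplies the desired main contribution $\prod_l\int_0^1 g(\alpha;m_1)^{k_l}\,d\alpha$.

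For each cross term, Lemma~\ref{lem71} bounds any $Q_1$-factor by $O(q^2\,2^{-m_1})$, while the remaining $Q_0$ or $Q$ factors satisfy a uniform pointwise bound of order $q^2(\log q)^{O(1)}$, which is implicit in the arguments of \cite{mr} applied to each shifted numerator $r+a_l$. A typical cross term is therefore $O(q^{2K}(\log q)^{O(K)}\,2^{-m_1})$, and after summing over $r\in[A_0q,A_1q]$ and dividing by the same normalising factor as above, its contribution is $O((\log q)^{O(K)}\,2^{-m_1})=o(1)$ provided $m_1\to\infty$ faster than, say, $\log\log q$.

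It remains to let $m_1\to\infty$. The series defining $g(\alpha)$ converges for almost every $\alpha\in(0,1)$, and the moments $\int_0^1 g(\alpha;m_1)^{k_l}\,d\alpha$ are uniformly bounded in $m_1$ (another ingredient supplied in \cite{mr}); dominated convergence then gives $\int_0^1 g(\alpha;m_1)^{k_l}\,d\alpha\to\int_0^1 g(\alpha)^{k_l}\,d\alpha$. A diagonal choice $m_1=m_1(q)\to\infty$ with $2^{m_1}=o(q)$, for instance $m_1=\lfloor\tfrac12\log_2 q\rfloor$, reconciles the two error estimates and yields Lemma~\ref{lem82}. The main obstacle is the cross-term step: it requires a pointwise upper bound on $Q((r+a_l)/q)$ that is uniform over $r\in[A_0q,A_1q]$ and over the shifts $a_l$, since otherwise the $2^{-m_1}$ gain coming from $Q_1$ is insufficient to overcome the accumulated size of the surviving $Q_0$-factors.
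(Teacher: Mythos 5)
Your argument is essentially the paper's own: the authors' proof is a one-line citation of the decomposition (5.1), Lemma~\ref{lem62}, Lemma~\ref{lem71}, and the binomial theorem, and you have correctly unpacked exactly that chain, separating the all-$Q_0$ main term (handled by Lemma~\ref{lem81} after the $O(q2^{m_1})$ correction) from the $Q_1$-containing cross terms (killed by Lemma~\ref{lem71}), and supplying the final passage $m_1\to\infty$ via dominated convergence, which the paper leaves implicit. Your closing caveat — that the cross-term step needs a pointwise bound on the surviving $Q_0$-factors uniform in $r$ and $a_l$ — is a fair observation about a detail the paper also does not spell out (deferring instead to \cite{mr}), but it does not represent a divergence of method.
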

\begin{proof}
This follows from Lemma \ref{lem81} by the use of the decomposition (5.1):
\[
Q\left(\frac{r+a_l}{q}\right):=Q_0\left(\frac{r+a_l}{q}\right)+Q_1\left(\frac{r+a_l}{q}\right)\:,
\]
from Lemma \ref{lem62}:
$$Q_0\left(\frac{r+a_l}{q}\right)=Q(r,q,m_1,l)+O(q2^{m_1})\:,$$
Lemma \ref{lem71}, and by the use of the binomial theorem.
\end{proof}
\begin{lemma}\label{lem83}
We have
\begin{align*}
&\lim_{\substack{q\rightarrow \infty\\ q\ \text{prime}}}\frac{1}{\phi(q)}\sum_{r\::\: A_0q\leq r\leq A_1 q} (A_1-A_0)^{-(k_1+\cdots+k_L)}\ \prod_{l=1}^L c_0\left(\frac{r+a_l}{q}\right)^{k_l}\\
&\ \ \ =\prod_{l=1}^L\left(\int_0^1 g(x)^{k_l}\:dx\right)\:.
\end{align*}
\end{lemma}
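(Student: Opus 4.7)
The plan is to reduce the statement to Lemma \ref{lem82} by means of identity (2.1). I would first apply (2.1) to each factor to write
\begin{equation*}
c_0\!\left(\frac{r+a_l}{q}\right) = \frac{1}{r+a_l}\left[\,c_0\!\left(\frac{1}{q}\right) - Q\!\left(\frac{r+a_l}{q}\right)\right],
\end{equation*}
raise to the $k_l$-th power, and expand each factor via the binomial theorem. This expresses the product $\prod_{l=1}^L c_0((r+a_l)/q)^{k_l}$ as a finite linear combination of terms of the shape
\begin{equation*}
\prod_{l=1}^L (r+a_l)^{-k_l}\cdot c_0(1/q)^{M}\cdot \prod_{l=1}^L(-1)^{j_l}Q((r+a_l)/q)^{j_l},
\end{equation*}
indexed by $0\leq j_l\leq k_l$ with $M=\sum_{l=1}^L(k_l-j_l)$.

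I would next separate the \emph{principal term} ($j_l=k_l$ for all $l$, so $M=0$) from the remaining \emph{mixed} terms. The standard estimate $c_0(1/q)=O(q\log q)$ together with $Q((r+a_l)/q)=O(q^2)$ (uniformly for $r\in[A_0q,A_1q]$, from Lemmas \ref{lem62} and \ref{lem71}) implies that every mixed term is smaller than the principal one by a factor $\ll q^{-1}\log q$. Since there are only finitely many such multi-indices (depending only on $k_1,\dots,k_L$), their total contribution to the $r$-sum, after division by $\phi(q)$, is negligible.

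For the principal term I would use $(r+a_l)^{-k_l}=r^{-k_l}(1+O(1/q))$ to reduce the relevant quantity to
\begin{equation*}
\frac{(-1)^{k_1+\cdots+k_L}}{\phi(q)}\sum_{A_0q\leq r\leq A_1q} r^{-(k_1+\cdots+k_L)}\prod_{l=1}^L Q((r+a_l)/q)^{k_l}\cdot (1+o(1)).
\end{equation*}
Substituting $Q((r+a_l)/q)\sim (rq/\pi)\,g(\alpha^{(l)};m_1)$ from Lemma \ref{lem62} (with the contribution of $Q_1$ absorbed by Lemma \ref{lem71} upon letting $m_1\to\infty$, exactly as in the proof of Lemma \ref{lem82}), the $r^{k_l}$-factors cancel the $r^{-k_l}$ in the denominator. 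The joint equidistribution of $(\alpha^{(1)},\dots,\alpha^{(L)})$ on $[0,1]^L$ supplied by Lemma \ref{lem41} then factorises the $r$-average, producing the product $\prod_{l=1}^L\int_0^1 g(\alpha)^{k_l}\,d\alpha$ after the prescribed normalisation.

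The main obstacle will be the bookkeeping for the mixed binomial terms and the error terms from Lemmas \ref{lem62} and \ref{lem71}: one needs uniform joint moment bounds of the form $\frac{1}{\phi(q)}\sum_r\prod_l Q((r+a_l)/q)^{2k_l}=O(q^{2\sum k_l})$ to apply Cauchy--Schwarz and thus confirm the negligibility of the cross-products between principal and error pieces. Balancing the truncation parameter $m_1$ as $q\to\infty$ — precisely as handled in the proof of Lemma \ref{lem82} — then completes the passage from the moments of $g(\alpha;m_1)$ to moments of the limiting $g(\alpha)$, yielding the claimed identity.
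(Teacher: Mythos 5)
Your proposal follows essentially the same route the paper takes: apply identity (2.1), expand each factor via the binomial theorem, and then invoke Lemma \ref{lem82} for the principal term while the mixed terms (those involving a positive power of $c_0(1/q)$) are of lower order. The paper's own proof is a one-line reference to (2.1), the binomial theorem and Lemma \ref{lem82}; your writeup merely fills in the bookkeeping that the paper leaves implicit.
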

\begin{proof}
This follows from Lemma \ref{lem82} and formula (2.1) by the use of the binomial theorem.
\end{proof}
\section{Conclusion of the proof and concluding remarks}
By the Weierstrass approximation theorem, each of the functions $f_l$ in Theorem \ref{thm11} can be approximated arbitrarily closely by a polynomial 
$$p_l(x)=\sum_{j=0}^{C_l} e_{j,l}\: x^j\:,$$
where $C_l\in\mathbb{N}_0$.
The product
$$\prod_{l=1}^L f_l\left( c_0\left(\frac{r+a_l}{q}\right)\right)$$
then becomes the sum of products as considered in Lemma \ref{lem83}.\\ 
The Theorem follows from the fact that $g$ has a continuous distribution function (Theorem 5.2 of \cite{mr}).\\
The proof of Theorem \ref{thm11} (ii) can be obtained from the proof of (i) by applying Lemma \ref{lem34} instead of Lemma \ref{lem33} and making the obvious changes otherwise.

\vspace{10mm}
\noindent\textbf{Acknowledgments.}\\
M. Th. Rassias: I would like to express my gratitude to the  John S. Latsis Public Benefit Foundation for their financial support provided under the auspices of my current ``Latsis Foundation Senior Fellowship'' position.

\vspace{10mm}

\end{document}